\newtheorem{theorem}{Theorem}
\newtheorem{lemma}{Lemma}
\newtheorem{proposition}{Proposition}
\newtheorem{definition}{Definition}
\newenvironment{proof}{\noindent{\sf Proof.}}{\hfill $\boxtimes\hspace{2mm}$\linebreak}
\newcommand{\qed}{\hfill $\boxtimes\hspace{2mm}$ \linebreak}
\begin{document}

\title{Information Flow in Strategic Games}
\title{Functional Dependence in Strategic Games}
\author{ Kristine Harjes and Pavel Naumov\\ \\ \small Department of Mathematics and Computer Science\\ \small McDaniel College, 
Westminster, Maryland, USA \\ \small
              {\sf \{keh013,pnaumov\}@mcdaniel.edu}     }

\def\titlerunning{Functional Dependence in Strategic Games}
\def\authorrunning{Kristine Harjes and Pavel Naumov}

\maketitle

\begin{abstract}
The paper studies properties of functional dependencies between strategies of players in Nash equilibria of multi-player strategic games. The main focus is on the properties of functional dependencies in the context of a fixed dependency graph for pay-off functions. A logical system describing properties of functional dependence for any given graph is proposed and is  proven to be complete.
\end{abstract}




\section{Introduction}

\vspace{2mm}
\noindent {\bf Functional Dependence.}
In this paper we study dependency between players' strategies in Nash equilibria. For example, the coordination game described by Table~\ref{coordination game} has two Nash equilibria: $(a_1,b_1)$ and $(a_2,b_2)$. Knowing the strategy of player $a$ in  a Nash equilibrium of this game, one can predict the strategy of player $b$. We say that player $a$ functionally determines player $b$ and denote this by $a\rhd b$. 

\begin{wraptable}{l}{0.40\textwidth}
\begin{center}
\vspace{-5mm}
\begin{tabular}{l|c|c|c}
		& $b_1$ 	& $b_2$\\ \hline
$a_1$	& 1,1	& 0,0\\ 
$a_2$	& 0,0	& 1,1\\ 
\end{tabular}
\caption{Coordination Game}
\label{coordination game}
\end{center}
\vspace{5mm}
\end{wraptable}

Note that in the case of the coordination game, we also have $b\rhd a$. However, for the game described by Table~\ref{asymmetric game} statement $a\rhd b$ is true, but $b\rhd a$ is false.

\begin{wraptable}{r}{0.4\textwidth}
\vspace{0mm}
\begin{center}
\begin{tabular}{l|c|c|c}
		& $b_1$ 	& $b_2$\\ \hline
$a_1$	& 1,1	& 0,0\\ 
$a_2$	& 0,0	& 1,1\\ 
$a_3$	& 1,1	& 0,0\\ 
\end{tabular}
\end{center}
\caption{Strategic Game}
\label{asymmetric game}
\vspace{-2mm}
\end{wraptable}

The main focus of this paper is functional dependence in multiplayer games. For example, consider a ``parity" game with three players $a$, $b$, $c$. Each of the players picks 0 or 1, and all players are rewarded if the sum of all three numbers is even. This game has four different Nash equilibria: $(0,0,0)$, $(0,1,1)$, $(1,0,1)$, and $(1,1,0)$. It is easy to see that knowledge of any two players' strategies in a Nash equilibrium reveals the third. Thus, using our notation, for example $a,b\rhd c$. At the same time, $\neg(a\rhd c)$. 

As another example, consider a game between three players in which each player  picks 0 or 1 and all players are rewarded if they have chosen the same strategy. This game has only two Nash equilibria: $(0,0,0)$ and $(1,1,1)$. Thus, knowledge of the strategy of player $a$ in a Nash equilibrium reveals the strategies of the two other players. We write this as $a\rhd b,c$.

Functional dependence as a relation has been studied previously, especially in the context of database theory. Armstrong~\cite{a74} presented the following sound and complete axiomatization of this relation:
\begin{enumerate}
\item 
{\em Reflexivity}: $A\rhd B$, if $B\subseteq A$,
\item 
{\em Augmentation}: $A\rhd B \rightarrow A,C\rhd B,C$,
\item 
{\em Transitivity}: $A\rhd B \rightarrow (B\rhd C \rightarrow A\rhd C)$,
\end{enumerate}
where here and everywhere below $A,B$ denotes the union of sets $A$ and $B$. The above axioms are known in database literature as Armstrong's axioms~\cite{guw09}. 
Beeri, Fagin, and Howard~\cite{bfh77} suggested a variation of Armstrong's axioms that describe properties of multi-valued dependence. 

\vspace{2mm}
\noindent {\bf Dependency Graphs.}
As a side result, we will show that the logical system formed by the Armstrong axioms is sound and complete with respect to the strategic game semantics. Our main result, however, is a sound and complete axiomatic system for the relation $\rhd$ in games with a given dependency graph.

Dependency graphs~\cite{kls01uai, lks01nips, egg07ec,egg06eccc} put restrictions on the pay-off functions that can be used in the game. For example, dependency graph $\Gamma_1$ depicted in Figure~\ref{intro_alpha}, specifies that the pay-off function of player $a$ only can depend on the strategy of player $b$ in addition to the strategy of player $a$ himself.  The pay-off function for player $b$ can only depend on the strategies of players $a$ and $c$  in addition to the strategy of player $b$ himself, etc. 

\begin{wrapfigure}{l}{0.45\textwidth}
\begin{center}
\vspace{-2mm}
\scalebox{.5}{\includegraphics{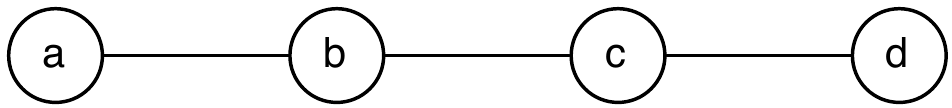}}
\vspace{0mm}
\footnotesize\caption{Dependency Graph $\Gamma_1$}\label{intro_alpha}
\vspace{-5mm}
\end{center}
\vspace{0cm}
\end{wrapfigure}

An example of a game over graph $\Gamma_1$ is a game between players $a$, $b$, $c$, and $d$ in which these players choose real numbers as their strategies. The pay-off function of players $a$ and $d$ is the constant 0. Player $b$ is rewarded if his value is equal to the mean of the values of players $a$ and $c$. Player $c$ is rewarded if his value is equal to the mean of the values of players $b$ and $d$. Thus, Nash equilibria of this game are all quadruples $(a,b,c,d)$ such that $2b=a+c$ and $2c=b+d$. Hence, in this game $a,b\rhd c,d$ and $a,c\rhd b,d$, but $\neg(a\rhd b)$.

Note that although the statement $a,b\rhd c,d$ is true for the game described above, it is not true for many other games with the same dependency graph $\Gamma_1$. In this paper we study properties of functional dependence that are common to all games with the same dependency graph. An example of such statement for the graph $\Gamma_1$, as we will show in Proposition~\ref{XYZ}, is $a\rhd d \rightarrow b,c\rhd d$. 

Informally, this property is true for any game over graph $\Gamma_1$ because any dependencies between players $a$ and $d$ must be established through players $b$ and $c$. This intuitive approach, however, does not always lead to the right conclusion. For example, in graph $\Gamma_2$ depicted in Figure~\ref{intro_beta}, players $b$ and $c$ also separate players $a$ and $d$. Thus, according to the same intuition, the statement $a\rhd d \rightarrow b,c\rhd d$ must also be true for any game over graph $\Gamma_2$. This, however, is not true. Consider, for example, a game in which all four players have three strategies: {\em rock}, {\em paper}, and {\em scissors}. The pay-off function of players $a$ and $d$ is the constant 0. If $a$ and $d$ pick the same strategy, then neither $b$ nor $c$ is paid. If players $a$ and $d$ pick different strategies, then players $b$ and $c$ are paid according to the rules of the standard rock-paper-scissors game. In this game Nash equilibrium is only possible if $a$ and $d$ pick the same strategy. Hence, $a\rhd d$. At the same time, in any such equilibria $b$ and $c$ can have any possible combination of values. Thus, $\neg (b,c\rhd d)$. Therefore, the statement $a\rhd d \rightarrow b,c\rhd d$ is not true for this game.

\begin{wrapfigure}{r}{0.45\textwidth}
\begin{center}
\vspace{-5mm}
\scalebox{.5}{\includegraphics{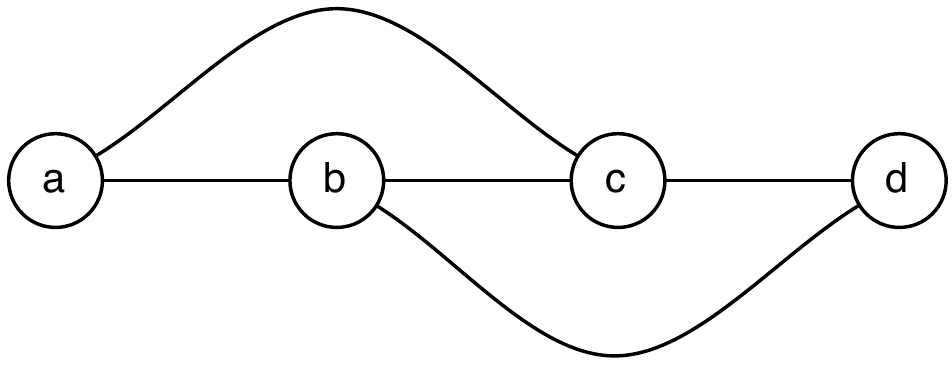}}
\vspace{0mm}
\footnotesize\caption{Dependency Graph $\Gamma_2$}\label{intro_beta}
\vspace{-5mm}
\end{center}
\vspace{0cm}
\end{wrapfigure}

As our final example, consider the graph $\Gamma_3$ depicted in Figure~\ref{intro_gamma}. We will show that $a\rhd c\rightarrow b\rhd c$ is not true for at least one game over graph $\Gamma_3$. Indeed, consider the game in which players $a,b$, and  $c$ use real numbers as possible strategies. Players $a$ and $c$ have a constant pay-off of 0. The pay-off of the player $b$ is equal to $0$ if players $a$ and $c$ choose the same real number. Otherwise, it is equal to the number chosen by the player $b$ himself. Note that in any Nash equilibrium of this game, the strategies of players $a$ and $c$ are equal. Therefore, $a\rhd c$, but $\neg(b\rhd c)$.

\begin{wrapfigure}{l}{0.45\textwidth}
\begin{center}
\vspace{-2mm}
\scalebox{.5}{\includegraphics{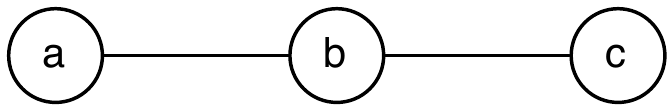}}
\vspace{0mm}
\footnotesize\caption{Dependency Graph $\Gamma_3$}\label{intro_gamma}
\vspace{-5mm}
\end{center}
\vspace{0cm}
\end{wrapfigure}

The main result of this paper is a sound and complete axiomatization of all properties of functional dependence for any given dependency graph. This result is closely related to work by More and Naumov on functional dependence of secrets over hypergraphs~\cite{mn11clima}. However, the logical system presented in this paper is significantly different from theirs. A similar relation of ``rational" functional dependence without any connection to dependency graphs has been axiomatized by Naumov and Nicholls~\cite{nn12loft}. 

The counterexample that we have constructed for the game in Figure~\ref{intro_gamma} significantly relies  on the fact that  player $b$ has infinitely many strategies. However, in this paper we
 show completeness with respect to the semantics of finite games, making the result stronger.

\section{Syntax and Semantics}

The graphs that we consider in this paper contain no loops, multiple edges, or directed edges.
\begin{definition}\label{border}
For any set of vertices $U$ of a graph $(V,E)$, border ${\cal B}(U)$ is the set 
$$\{v\in U\;|\; \mbox{$(v,w)\in E$ for some $w\in V\setminus U$}\}.$$
\end{definition}
A cut $(U,W)$ of a graph $(V,E)$ is a partition $U\sqcup W$ of the set $V$. For any  vertex $v$ in a graph, by $Adj(v)$ we mean the set of all vertices adjacent to $v$. By $Adj^+(v)$ we mean the set $Adj(v)\cup\{v\}$.

\begin{definition}\label{formula}
For any graph $\Gamma=(V,E)$, by $\Phi(\Gamma)$ we mean the minimal set of formulas such that
(i) $\bot\in \Phi(\Gamma)$,
(ii) $A\rhd B\in \Phi(\Gamma)$ for each $A\subseteq V$ and $B\subseteq V$,
(iii) $\phi\rightarrow\psi\in\Phi(\Gamma)$ for each $\phi,\psi\in\Phi(\Gamma)$.
\end{definition}

\begin{definition}\label{}
By game over graph $\Gamma=(V,E)$ we mean any strategic game $G=(V,\{S_v\}_{v\in V},\{u_v\}_{v\in V})$ such that
(i) The finite set of players in the game is the set of vertices $V$,
(ii) The finite set of strategies $S_v$ of any player $v$ is an arbitrary set,
(iii) The pay-off function $u_v$ of any player $v$ only depends on the strategies of the players in $Adj^+(v)$.
\end{definition}
\noindent By $NE(G)$ we denote the set of all Nash equilibria in the game $G$. 
The next definition is the core definition of this paper. The second item in the list below gives a precise meaning of the functional dependence predicate $A\rhd B$.

\begin{definition}\label{true}
For any game $G$ over graph $\Gamma$ and any $\phi\in\Phi(\Gamma)$, we define binary relation $G\vDash \phi$ as follows
(i) $G\nvDash\bot$,
(ii) $G\vDash A\rhd B$ if ${\mathbf s}=_A{\mathbf t}$ implies ${\mathbf s}=_B {\mathbf t}$ for each ${\mathbf s},{\mathbf t}\in NE(G)$,
(iii) $G\vDash\psi_1\rightarrow\psi_2$ if $G\nvDash\psi_1$ or $G\vDash\psi_2$,
where here and everywhere below $\langle s_v\rangle_{v\in V}=_X \langle t_v\rangle_{v\in V}$ means that $s_x=t_x$ for each $x\in X$.
\end{definition}

\section{Axioms}
The following is the set of axioms of our logical system. It consists of the original Armstrong axioms and an additional Contiguity axiom that captures properties of functional dependence specific to a given graph $\Gamma$.

\begin{enumerate}
\item Reflexivity: $A\rhd B$, where $B\subseteq A$
\item Augmentation: $A\rhd B\rightarrow A,C\rhd B,C$
\item Transitivity: $A\rhd B \rightarrow (B\rhd C \rightarrow A\rhd C)$
\item Contiguity: $A,B\rhd C\rightarrow {\cal B}(U),{\cal B}(W),B\rhd C$, where $(U,W)$ is a cut of the graph such that $A\subseteq U$ and $C\subseteq W$.
\end{enumerate}
Note that the Contiguity axiom, unlike the Gateway axiom~\cite{mn11clima}, effectively requires ``double layer" divider ${\cal B}(U),{\cal B}(W)$ between sets $A$ and $C$. This is because in our setting values are assigned to the vertices and  not to the edges of the graph.

We write $\vdash_\Gamma\phi$ if $\phi\in \Phi(\Gamma)$ is provable from the combination of the axioms above and propositional tautologies in the language $\Phi(\Gamma)$ using the Modus Ponens inference rule. We write $X \vdash_\Gamma\phi$ if $\phi$ is provable using the additional set of axioms $X$. We often omit the parameter $\Gamma$ when its value is clear from the context.

\section{Examples}

In this section we give examples of proofs in our formal system. The soundness and the completeness of this system will be shown in the next two sections.

\begin{proposition}\label{XYZ}
$\vdash_{\Gamma_1} a\rhd d\rightarrow b,c\rhd d$, where $\Gamma_1$ is the graph depicted in Figure~\ref{intro_alpha}.
\end{proposition}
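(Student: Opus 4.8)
The plan is to obtain this implication as essentially a single instance of the Contiguity axiom. First I would fix the structure of $\Gamma_1$: from the introduction it is the path $a - b - c - d$, so that $Adj(a)=\{b\}$, $Adj(b)=\{a,c\}$, $Adj(c)=\{b,d\}$, and $Adj(d)=\{c\}$. The guiding intuition is that the pair $b,c$ forms the ``double layer'' separating $a$ from $d$, which is exactly the kind of divider that the conclusion of Contiguity produces.

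Concretely, I would apply the Contiguity axiom to the antecedent $a\rhd d$, read as $A,B\rhd C$ with $A=\{a\}$, $B=\emptyset$, and $C=\{d\}$. As the cut required by the axiom I would take $(U,W)=(\{a,b\},\{c,d\})$, which is a genuine partition of the four vertices satisfying the side conditions $A=\{a\}\subseteq U$ and $C=\{d\}\subseteq W$.

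Next I would compute the two borders using Definition~\ref{border}. In $U=\{a,b\}$ only $b$ has a neighbor outside $U$ (namely $c$), so ${\cal B}(U)=\{b\}$; symmetrically, in $W=\{c,d\}$ only $c$ has a neighbor outside $W$ (namely $b$), so ${\cal B}(W)=\{c\}$. Substituting into the conclusion ${\cal B}(U),{\cal B}(W),B\rhd C$ of the axiom then yields $\{b\},\{c\},\emptyset\rhd\{d\}$, i.e.\ $b,c\rhd d$. Thus the full axiom instance reads $a\rhd d\rightarrow b,c\rhd d$, which is exactly the desired formula.

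Because the statement reduces to a direct instance of one axiom, there is no real obstacle here: no chaining of Reflexivity, Augmentation, or Transitivity is needed. The only points requiring care are the bookkeeping — taking $B=\emptyset$ so that the union $A,B$ collapses to $\{a\}$ and the union ${\cal B}(U),{\cal B}(W),B$ collapses to $\{b,c\}$ — and checking that the chosen cut genuinely separates $a$ from $d$ with the computed borders. I expect the only interpretive subtlety to be recognizing that it is Contiguity, rather than the classical Armstrong axioms, that captures this graph-specific property.
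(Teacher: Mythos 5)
Your proof is correct and is essentially identical to the paper's own argument: the same cut $(U,W)=(\{a,b\},\{c,d\})$, the same border computation ${\cal B}(U)=\{b\}$, ${\cal B}(W)=\{c\}$, and a single application of the Contiguity axiom (the paper leaves the choice $A=\{a\}$, $B=\varnothing$, $C=\{d\}$ implicit, which you spell out). No differences worth noting.
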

\begin{proof}
Consider cut $(U,W)$ of the graph $\Gamma_1$ such that $U=\{a,b\}$ and $W=\{c,d\}$. Thus, ${\cal B}(U)=\{b\}$ and ${\cal B}(W)=\{c\}$. Therefore, by the Contiguity axiom, $a\rhd d\rightarrow b,c\rhd d$.
\end{proof}

\begin{proposition}\label{prop2}
$\vdash_{\Gamma_1} a,c\rhd d\rightarrow (d,b\rhd a\rightarrow b,c\rhd a,d)$, where $\Gamma_1$ is the graph depicted in Figure~\ref{intro_alpha}.
\end{proposition}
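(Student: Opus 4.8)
The plan is to treat the two antecedents $a,c\rhd d$ and $d,b\rhd a$ as assumptions, derive the two ``halves'' $b,c\rhd d$ and $b,c\rhd a$ of the conclusion from them independently, and then glue the halves into $b,c\rhd a,d$. Throughout I use that $\Gamma_1$ is the path with vertices $a,b,c,d$ and edges $ab$, $bc$, and $cd$.

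For the first half, I would assume $a,c\rhd d$ and apply the Contiguity axiom with the cut $U=\{a,b\}$, $W=\{c,d\}$ (the same cut used in Proposition~\ref{XYZ}). Splitting the left side $\{a,c\}$ as $A=\{a\}\subseteq U$ and $B=\{c\}$, with $C=\{d\}\subseteq W$, the borders are ${\cal B}(U)=\{b\}$ and ${\cal B}(W)=\{c\}$, so Contiguity delivers ${\cal B}(U),{\cal B}(W),B\rhd C$, namely $b,c,c\rhd d$, i.e. $b,c\rhd d$. For the second half, I would assume $d,b\rhd a$ and apply Contiguity with the mirrored cut $U=\{c,d\}$, $W=\{a,b\}$, splitting $\{d,b\}$ as $A=\{d\}\subseteq U$ and $B=\{b\}$, with $C=\{a\}\subseteq W$; now ${\cal B}(U)=\{c\}$ and ${\cal B}(W)=\{b\}$, and Contiguity yields $c,b,b\rhd a$, i.e. $b,c\rhd a$.

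To finish, I would combine $b,c\rhd a$ and $b,c\rhd d$ into $b,c\rhd a,d$ via the standard additivity property derivable from Armstrong's axioms: Augmentation turns $b,c\rhd a$ into $b,c\rhd a,b,c$ and turns $b,c\rhd d$ into $a,b,c\rhd a,d$, and Transitivity of these two yields $b,c\rhd a,d$. The two assumptions are then discharged by the deduction theorem for propositional reasoning, producing the stated nested implication.

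I do not expect a genuine obstacle here; the only delicate point is the bookkeeping inside each Contiguity step --- correctly partitioning each left-hand side into the axiom's $A$ and $B$ and checking that the two borders come out to exactly $\{b\}$ and $\{c\}$ for the chosen cut. The key structural observation that makes everything work is that each application of Contiguity collapses the ``outer'' endpoint ($a$, respectively $d$) onto the middle pair $\{b,c\}$, after which additivity assembles the result.
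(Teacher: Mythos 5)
Your proposal is correct and follows essentially the same route as the paper: the same cut $U=\{a,b\}$, $W=\{c,d\}$ (mirrored for the second antecedent, which the paper handles with a ``similarly''), the same Contiguity instantiations yielding $b,c\rhd d$ and $b,c\rhd a$, and the identical Augmentation--Transitivity gluing via $b,c\rhd a,b,c$ and $a,b,c\rhd a,d$.
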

\begin{proof}
Assume that $a,c\rhd d$ and $d,b\rhd a$.
Consider cut $(U,W)$ of the graph $\Gamma_1$ such that $U=\{a,b\}$ and $W=\{c,d\}$. Thus, ${\cal B}(U)=\{b\}$ and ${\cal B}(W)=\{c\}$. Therefore,  by the Contiguity axiom with $A=\{a\}$, $B=\{c\}$, and $C=\{d\}$, $a,c\rhd d\rightarrow b,c\rhd d$. Thus, 
\begin{equation}\label{eq1}
 b,c\rhd d.
\end{equation}
 by the first assumption. Similarly, using the second assumption, $b,c\rhd a$. Hence, by the Augmentation axiom, 
\begin{equation}\label{eq2}
b,c\rhd a,b,c.
\end{equation}
Thus, from statement (\ref{eq1}) by the Augmentation axiom,
$a,b,c\rhd a,d$. Finally, using statement~(\ref{eq2}) and the Transitivity axiom,
$b,c\rhd a,d$.
\end{proof}


\begin{proposition}\label{prop3}
$\vdash_{\Gamma_4} a,c\rhd e\rightarrow b,c,d\rhd e$, where $\Gamma_4$ is the graph depicted in Figure~\ref{example_delta}.
\end{proposition}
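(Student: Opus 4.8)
The plan is to obtain the implication directly from the Contiguity axiom, instantiated with $A=\{a\}$, $B=\{c\}$, and $C=\{e\}$. Under this choice the premise $A,B\rhd C$ of Contiguity is precisely $a,c\rhd e$, and its conclusion is ${\cal B}(U),{\cal B}(W),c\rhd e$ for whatever cut $(U,W)$ of $\Gamma_4$ I select with $a\in U$ and $e\in W$. Thus the entire argument reduces to exhibiting one cut of $\Gamma_4$ that separates $a$ from $e$ and then reading off its two borders.

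The cut I would use is the one that places $a$ in $U$ and $e$ in $W$ in such a way that the frontier ${\cal B}(U)\cup{\cal B}(W)$ consists only of vertices drawn from $\{b,c,d\}$. I expect the borders to come out as a single vertex on each side of a crossing edge from $b$ to $d$, so that ${\cal B}(U)=\{b\}$ and ${\cal B}(W)=\{d\}$; in that case Contiguity immediately yields $b,d,c\rhd e$, which is $b,c,d\rhd e$, and the proof is finished. If the natural cut produces a slightly larger frontier, all I need is that it still satisfies ${\cal B}(U)\cup{\cal B}(W)\subseteq\{b,c,d\}$.

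To handle that more general case I would enlarge the antecedent, as in the proof of Proposition~\ref{prop2}. Writing $S={\cal B}(U)\cup{\cal B}(W)\cup\{c\}$, Contiguity gives $S\rhd e$; since $S\subseteq\{b,c,d\}$, Reflexivity gives $b,c,d\rhd S$, and Transitivity applied to $b,c,d\rhd S$ together with $S\rhd e$ yields $b,c,d\rhd e$. Discharging the assumption $a,c\rhd e$ then produces the desired implication.

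The main obstacle is the graph-combinatorial step rather than the logic: everything hinges on producing a cut of $\Gamma_4$ separating $a$ from $e$ whose combined border sits inside $\{b,c,d\}$. Verifying this amounts to checking, against the picture in Figure~\ref{example_delta}, that no edge crossing the chosen cut touches a vertex outside $\{b,c,d\}$, i.e.\ that $\{b,c,d\}$ genuinely forms the double-layer divider separating $a$ and $e$. Once the figure confirms this, the Contiguity axiom does all the real work.
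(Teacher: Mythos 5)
Your proposal is correct and takes essentially the same route as the paper: a single application of Contiguity with $A=\{a\}$, $B=\{c\}$, $C=\{e\}$. The cut the paper uses is $U=\{a,b,c\}$, $W=\{d,e\}$, giving ${\cal B}(U)=\{b,c\}$ and ${\cal B}(W)=\{d\}$, so the conclusion comes out as exactly $b,c,d\rhd e$ and your fallback weakening step is not even needed.
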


\begin{wrapfigure}{l}{0.45\textwidth}
\begin{center}
\vspace{-5mm}
\scalebox{.4}{\includegraphics{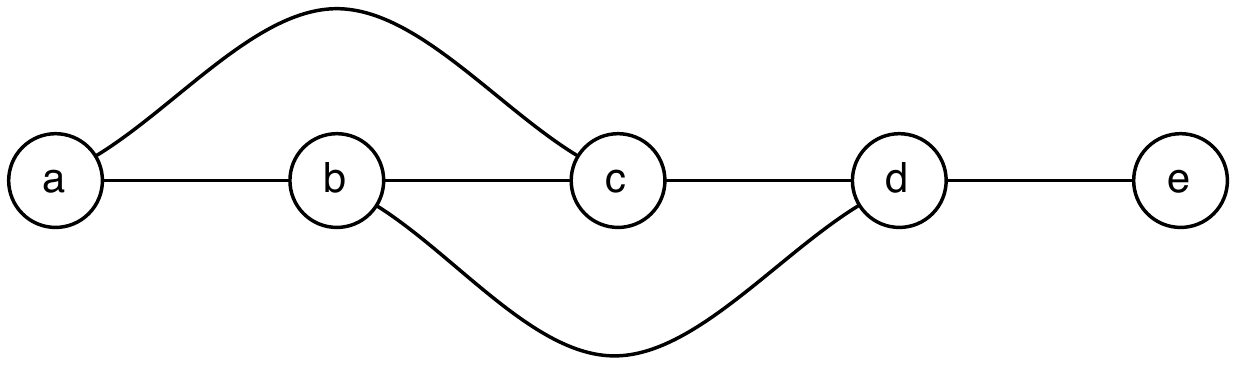}}
\vspace{0mm}
\footnotesize\captionof{figure}{Dependency Graph $\Gamma_4$}\label{example_delta}
\vspace{-6mm}
\end{center}
\vspace{0cm}
\end{wrapfigure}

\noindent{\sf Proof.} Consider cut $(U,W)$ of the graph $\Gamma_4$ such that $U=\{a,b,c\}$ and $W=\{d,e\}$. Thus, ${\cal B}(U)=\{b,c\}$ and ${\cal B}(W)=\{d\}$. Therefore, $a,c\rhd e\rightarrow b,c,d\rhd e$ by the Contiguity axiom with $A=\{a\}$, $B=\{c\}$, and $C=\{e\}$.
\qed
\vspace{-5mm}

\begin{proposition}\label{prop4}
$\vdash_{\Gamma_5} a\rhd b\rightarrow (b\rhd c\rightarrow (c\rhd a \rightarrow d,e,f\rhd a,b,c))$, where
$\Gamma_5$ is depicted in Figure~\ref{example_epsilon}.
\end{proposition}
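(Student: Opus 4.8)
The plan is to first convert the three cyclic premises into the single fact that each of $a,b,c$ functionally determines the whole triangle $\{a,b,c\}$, and only then to invoke Contiguity to swap such an inner source for the outer vertices $d,e,f$. Before anything graph-specific, I would record the standard Armstrong-derived \emph{union} rule: from $A\rhd B$ and $A\rhd C$ one obtains $A\rhd B,C$. This follows because Augmentation turns $A\rhd B$ into $A\rhd A,B$ and turns $A\rhd C$ into $A,B\rhd B,C$, and Transitivity then composes the two. Next I would run Transitivity around the cycle $a\rhd b$, $b\rhd c$, $c\rhd a$ to produce every pairwise dependence inside the triangle (for instance $a\rhd c$ from $a\rhd b$ and $b\rhd c$, and $b\rhd a$ from $b\rhd c$ and $c\rhd a$). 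Combining these with Reflexivity ($x\rhd x$) and the union rule yields $x\rhd a,b,c$ for each $x\in\{a,b,c\}$; that is, any one inner vertex already pins down all three.

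The central move is to bring in $d,e,f$ through Contiguity. Here I would read off Figure~\ref{example_epsilon} the cut $(U,W)$ that keeps the triangle on the target side and whose double-layer border ${\cal B}(U),{\cal B}(W)$ is controlled by the outer vertices $d,e,f$. Applying Contiguity to a dependence of the form $x\rhd a,b,c$ obtained above, organized so that the target is $C=\{a,b,c\}$ and the inner source sits in the $B$-slot or in $A\subseteq U$, replaces that inner source by ${\cal B}(U),{\cal B}(W)$. This gives a dependence whose source is $\{d,e,f\}$ together with at most the residual inner vertices that the ``double layer'' is forced to include.

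Finally, I would eliminate any stray inner vertices appearing in the source of the Contiguity output by reusing the mutual-determination facts from the first step: since the cycle makes $d,e,f$ (via the triangle) determine those very vertices, Transitivity lets me substitute them away, and a last application of the union rule or Augmentation assembles the full conclusion $d,e,f\rhd a,b,c$. I expect the main obstacle to be exactly this interaction between Contiguity and the cycle. Because Contiguity insists on a \emph{double-layer} divider, the border will in general also contain one or two of $a,b,c$, so the cyclic hypotheses are not decorative: they are what allow Transitivity to absorb that contamination. Choosing the cut so that the leftover inner vertices are precisely the ones the cycle can cancel, and verifying this against the actual edges of $\Gamma_5$, is where the genuine work lies.
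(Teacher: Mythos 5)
Your Armstrong bookkeeping is fine: the derived union rule, Transitivity around the cycle, and the conclusion $x\rhd a,b,c$ for each $x\in\{a,b,c\}$ are all correctly obtainable. The gap is in the central Contiguity step. Contiguity has the form $A,B\rhd C\rightarrow {\cal B}(U),{\cal B}(W),B\rhd C$ with $A\subseteq U$ and $C\subseteq W$ for a \emph{partition} $(U,W)$. If you insist on $C=\{a,b,c\}$ while the source is a single inner vertex $x\in\{a,b,c\}$, then $x$ cannot occupy the $A$-slot (that would force $x\in U\cap W=\varnothing$), so it must go into $B$ --- and $B$ is copied verbatim into the conclusion, so Contiguity never deletes it. Moreover, in $\Gamma_5$ (a triangle on $d,e,f$ with pendant edges joining $a$ to $d$, $b$ to $e$, $c$ to $f$, as the paper's cut ${\cal B}(\{c,f\})=\{f\}$, ${\cal B}(\{a,b,d,e\})=\{d,e\}$ reveals) every cut with $\{a,b,c\}\subseteq W$ and $U\neq\varnothing$ forces ${\cal B}(W)$ to contain at least one of $a,b,c$, so the conclusion's source is always contaminated by inner vertices. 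Your proposed repair --- absorbing that contamination by Transitivity using the cyclic hypotheses --- is circular: to delete an inner vertex $x$ from a source of the form $d,e,f,x$ targeting $\{a,b,c\}$ you would need $d,e,f\rhd x$, which is precisely a piece of the conclusion being proved.

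The way out, and the route the paper takes, is to apply Contiguity \emph{before} forming any unions, to the single-vertex hypotheses themselves. For $c\rhd a$ take the cut $U=\{c,f\}$, $W=\{a,b,d,e\}$, so that ${\cal B}(U)=\{f\}$ and ${\cal B}(W)=\{d,e\}$; with $A=\{c\}$, $B=\varnothing$, $C=\{a\}$ Contiguity yields $d,e,f\rhd a$ outright, with nothing to clean up. The symmetric cuts give $d,e,f\rhd b$ and $d,e,f\rhd c$ from the other two hypotheses, and your union rule then assembles $d,e,f\rhd a,b,c$ exactly as in your final paragraph. (A variant closer to your plan does work if you keep the source out of the target: apply Contiguity to $c\rhd a,b$ with $C=\{a,b\}$ and $A=\{c\}$, then adjoin $c$ to the target afterwards; but with $C=\{a,b,c\}$ no useful cut exists.)
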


\noindent{\sf Proof.}
Assume $a\rhd b$, $b\rhd c$, and $c\rhd a$.
Consider cut $(U,W)$ of the graph $\Gamma_5$ such that $U=\{c,f\}$ and $W=\{a,b,d,e\}$. Thus, ${\cal B}(U)=\{f\}$ and ${\cal B}(W)=\{d,e\}$. Therefore, by the Contiguity axiom with $A=\{c\}$, $B=\varnothing$, and $C=\{a\}$, $c\rhd a\rightarrow d,e,f\rhd a$. Hence, $d,e,f\rhd a$ by the third assumption. Similarly, one can show $d,e,f\rhd b$, and $d,e,f\rhd c$. By applying the Augmentation axiom to the last three statements,
$
d,e,f\rhd a,d,e,f,
$
and
$
a,d,e,f\rhd a,b,d,e,f,
$
and
$
a,b,d,e,f\rhd a,b,c.
$
Therefore, $d,e,f\rhd a,b,c$ by the Transitivity axiom applied twice.
\qed

\begin{wrapfigure}{r}{0.45\textwidth}
\begin{center}
\vspace{-2mm}
\scalebox{.5}{\includegraphics{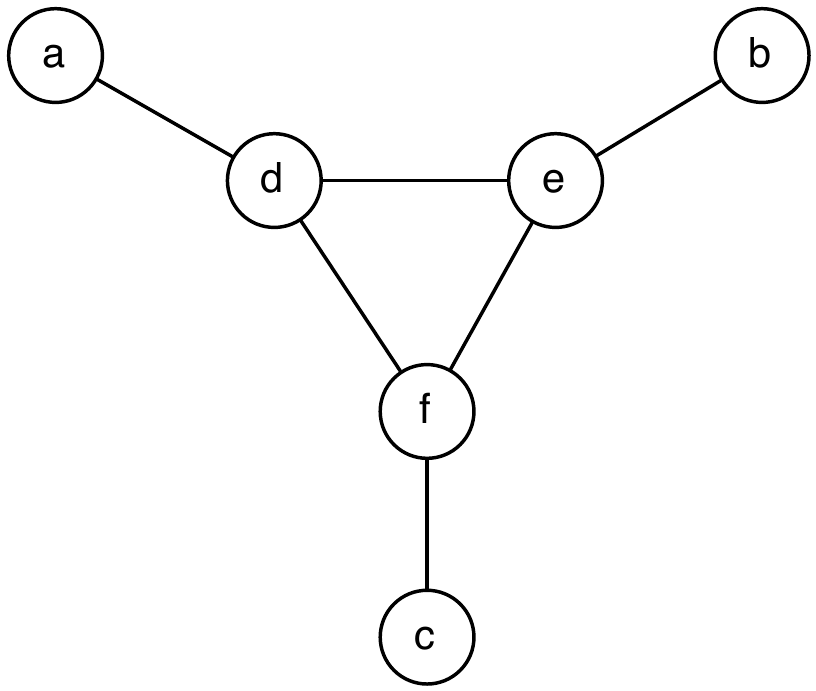}}
\vspace{0mm}
\footnotesize\caption{Dependency Graph $\Gamma_5$}\label{example_epsilon}
\vspace{0mm}
\end{center}
\vspace{0cm}
\end{wrapfigure}

Proposition~\ref{prop2} and Proposition~\ref{prop4} are special cases of a more general principle. We will say that a subset of vertices is {\em sparse} if the shortest path between any two vertices in this subset contains at least three edges. The general principle states that if $W$ is a sparse subset of vertices in the graph $(V,E)$ and each vertex $w\in W$ is functionally determined by the set $V\setminus\{w\}$, then the subset $V\setminus W$ functionally determines the subset $W$:    
$$
\bigwedge_{w\in W}\left((V\setminus \{w\})\right)\rhd w \rightarrow (V\setminus W)\rhd W.
$$
For example, the set $\{a,d\}$ in the graph $\Gamma_1$ depicted in Figure~\ref{intro_alpha} is sparse. Due to the general principle, 
$
a,b,c\rhd d \rightarrow(d,c,b\rhd a \rightarrow b,c\rhd a,d).
$
Thus, by Lemma~\ref{left mono},
$
a,c\rhd d \rightarrow(d,b\rhd a \rightarrow b,c\rhd a,d),
$
which is the statement of Proposition~\ref{prop2}. In the case of Proposition~\ref{prop4}, the sparse set is $\{a,b,c\}$. The proof of the general principle is similar to the proof of Proposition~\ref{prop4}.

\section{Soundness}

We prove soundness of our logical system by proving soundness of each of our four axioms separately.

\begin{lemma}[reflexivity]\label{} 
$G\vDash A\rhd B$ for each game $G$ over a graph $\Gamma=(V,E)$
and each $B\subseteq A\subseteq V$.
\end{lemma}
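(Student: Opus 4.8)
The plan is to unfold Definition~\ref{true}(ii) and reduce the claim to the trivial set-theoretic fact that agreement on a superset forces agreement on any subset. Concretely, to establish $G\vDash A\rhd B$ I would fix an arbitrary game $G$ over $\Gamma=(V,E)$ together with sets $B\subseteq A\subseteq V$, then take two arbitrary Nash equilibria ${\mathbf s},{\mathbf t}\in NE(G)$, assume ${\mathbf s}=_A{\mathbf t}$, and aim to derive ${\mathbf s}=_B{\mathbf t}$.

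First I would expand the hypothesis ${\mathbf s}=_A{\mathbf t}$ using the convention introduced at the end of Definition~\ref{true}: it says exactly that $s_x=t_x$ for every $x\in A$. Since by assumption $B\subseteq A$, every index $x\in B$ also lies in $A$, so the equalities $s_x=t_x$ hold in particular for all $x\in B$. By the same convention this is precisely ${\mathbf s}=_B{\mathbf t}$. As ${\mathbf s}$ and ${\mathbf t}$ were arbitrary equilibria, Definition~\ref{true}(ii) then yields $G\vDash A\rhd B$, completing the argument.

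I do not expect any genuine obstacle here: this lemma is simply the semantic counterpart of the Reflexivity axiom, and it follows directly from the monotonicity of the relation $=_X$ in the parameter $X$, using nothing specific to Nash equilibria, to the pay-off functions, or to the graph $\Gamma$. The only point requiring care is the bookkeeping, namely reading off $=_A$ and $=_B$ from the definition correctly and invoking the hypothesis $B\subseteq A$ at precisely the right step.
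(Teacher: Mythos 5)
Your proof is correct and follows essentially the same route as the paper's one-line argument: unfold $=_A$ and $=_B$ via the convention in Definition~\ref{true} and use $B\subseteq A$ to pass from agreement on $A$ to agreement on $B$. (Incidentally, your version fixes a small typo in the paper's proof, which cites ``$A\subseteq B$'' where $B\subseteq A$ is the hypothesis actually being used.)
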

\begin{proof}
For any ${\mathbf s},{\mathbf t}\in NE(G)$, if ${\mathbf s}=_A {\mathbf t}$, then ${\mathbf s}=_B {\mathbf t}$ because $A\subseteq B$.
\end{proof}

\begin{lemma}[augmentation]\label{} 
If $G\vDash A\rhd B$, then $G\vDash A,C\rhd B,C$ for each game $G$ over a graph $\Gamma=(V,E)$
and each $A,B,C\subseteq V$.
\end{lemma}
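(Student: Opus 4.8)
The plan is to prove the augmentation axiom
$$
G\vDash A\rhd B \;\Longrightarrow\; G\vDash A,C\rhd B,C
$$
directly from Definition~\ref{true}(ii), unwinding the quantifier over Nash equilibria. So I would fix an arbitrary game $G$ over $\Gamma=(V,E)$, fix arbitrary subsets $A,B,C\subseteq V$, and assume the hypothesis $G\vDash A\rhd B$, which means: for all ${\mathbf s},{\mathbf t}\in NE(G)$, if ${\mathbf s}=_A{\mathbf t}$ then ${\mathbf s}=_B{\mathbf t}$. The goal is to show $G\vDash A,C\rhd B,C$, i.e.\ for all ${\mathbf s},{\mathbf t}\in NE(G)$, if ${\mathbf s}=_{A\cup C}{\mathbf t}$ then ${\mathbf s}=_{B\cup C}{\mathbf t}$.

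The key observation is the elementary set-theoretic fact about the agreement relation $=_X$: for any sets $X$ and $Y$, we have ${\mathbf s}=_{X\cup Y}{\mathbf t}$ if and only if both ${\mathbf s}=_X{\mathbf t}$ and ${\mathbf s}=_Y{\mathbf t}$. This is immediate from the definition, since $\langle s_v\rangle=_X\langle t_v\rangle$ just asserts $s_x=t_x$ for every $x\in X$. I would invoke this in both directions: to decompose the hypothesis ${\mathbf s}=_{A\cup C}{\mathbf t}$ and to reassemble the conclusion ${\mathbf s}=_{B\cup C}{\mathbf t}$.

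The main steps, in order, are: first, take arbitrary ${\mathbf s},{\mathbf t}\in NE(G)$ with ${\mathbf s}=_{A\cup C}{\mathbf t}$; second, use the decomposition fact to split this into ${\mathbf s}=_A{\mathbf t}$ and ${\mathbf s}=_C{\mathbf t}$; third, apply the assumption $G\vDash A\rhd B$ to the component ${\mathbf s}=_A{\mathbf t}$ to obtain ${\mathbf s}=_B{\mathbf t}$; fourth, combine ${\mathbf s}=_B{\mathbf t}$ with the retained ${\mathbf s}=_C{\mathbf t}$ via the reassembly direction of the same fact to conclude ${\mathbf s}=_{B\cup C}{\mathbf t}$. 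Since ${\mathbf s}$ and ${\mathbf t}$ were arbitrary equilibria, this establishes $G\vDash A,C\rhd B,C$.

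There is no real obstacle here; this lemma is purely a matter of correctly unfolding the semantic definition of $\rhd$ and manipulating the componentwise agreement relation. The only point demanding any care is keeping track of which agreement component feeds into the dependence hypothesis and which is simply carried along unchanged, so that the final union $B\cup C$ is assembled from the right pieces. Everything follows at the level of individual coordinates without any appeal to the structure of the graph or the pay-off functions, so the proof will be a few lines.
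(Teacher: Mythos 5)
Your proposal is correct and follows exactly the same route as the paper's own proof: decompose ${\mathbf s}=_{A\cup C}{\mathbf t}$ into ${\mathbf s}=_A{\mathbf t}$ and ${\mathbf s}=_C{\mathbf t}$, apply the hypothesis $G\vDash A\rhd B$ to the first component, and recombine with the second to get ${\mathbf s}=_{B\cup C}{\mathbf t}$. There is nothing to add.
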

\begin{proof}
Suppose that $G\vDash A\rhd B$ and consider any ${\mathbf s},{\mathbf t}\in NE(G)$ such that ${\mathbf s}=_{A,C}{\mathbf t}$. We will show that ${\mathbf s}=_{B,C}{\mathbf t}$. Indeed, ${\mathbf s}=_{A,C}{\mathbf t}$ implies that ${\mathbf s}=_{A}{\mathbf t}$ and ${\mathbf s}=_{C}{\mathbf t}$. Thus, ${\mathbf s}=_{B}{\mathbf t}$ by the assumption $G\vDash A\rhd B$. Therefore, ${\mathbf s}=_{B,C}{\mathbf t}$.
\end{proof}

\begin{lemma}[transitivity]\label{}
If $G\vDash A\rhd B$ and $G\vDash B\rhd C$, then $G\vDash A\rhd C$ for each game $G$ over a graph $\Gamma=(V,E)$
and each $A,B,C\subseteq V$.
\end{lemma}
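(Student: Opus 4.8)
The plan is to prove the transitivity lemma directly from the semantic definition of $\rhd$ given in Definition~\ref{true}(ii), following exactly the pattern established in the preceding soundness proofs for reflexivity and augmentation. The statement to prove is that if $G\vDash A\rhd B$ and $G\vDash B\rhd C$, then $G\vDash A\rhd C$. Unwinding the definition, the two hypotheses say that for any two Nash equilibria ${\mathbf s},{\mathbf t}\in NE(G)$, agreement on $A$ forces agreement on $B$, and agreement on $B$ forces agreement on $C$. The goal is to show that agreement on $A$ forces agreement on $C$.

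First I would fix arbitrary ${\mathbf s},{\mathbf t}\in NE(G)$ and assume ${\mathbf s}=_A{\mathbf t}$; the target is ${\mathbf s}=_C{\mathbf t}$. Next I would apply the first hypothesis $G\vDash A\rhd B$ to the pair ${\mathbf s},{\mathbf t}$ together with the assumption ${\mathbf s}=_A{\mathbf t}$ to conclude ${\mathbf s}=_B{\mathbf t}$. Then I would feed this intermediate conclusion into the second hypothesis $G\vDash B\rhd C$ applied to the same pair ${\mathbf s},{\mathbf t}$, obtaining ${\mathbf s}=_C{\mathbf t}$. Since ${\mathbf s}$ and ${\mathbf t}$ were arbitrary equilibria, this establishes $G\vDash A\rhd C$ by Definition~\ref{true}(ii).

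I do not anticipate any genuine obstacle here: the argument is a routine chaining of two implications over the same fixed pair of equilibria, and it is essentially the transitivity of the relation $=_{(\cdot)}$ read off through the two dependence hypotheses. The only point requiring minor care is that both hypotheses must be instantiated at the \emph{same} pair ${\mathbf s},{\mathbf t}$, so that the conclusion ${\mathbf s}=_B{\mathbf t}$ produced by the first can serve directly as the antecedent for the second; this is immediate once the pair is fixed at the outset. This proof is noticeably shorter and simpler than the earlier Contiguity-based propositions, since it involves no graph structure, no cuts, and no borders — it relies purely on the quantifier-over-equilibria form of the semantics.
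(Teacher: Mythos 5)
Your proof is correct and follows exactly the same route as the paper's: fix a pair ${\mathbf s},{\mathbf t}\in NE(G)$ agreeing on $A$, use $G\vDash A\rhd B$ to get agreement on $B$, then $G\vDash B\rhd C$ to get agreement on $C$. No issues.
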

\begin{proof}
Suppose that $G\vDash A\rhd B$ and $G\vDash B\rhd C$. Consider any ${\mathbf s},{\mathbf t}\in NE(G)$ such that ${\mathbf s}=_{A}{\mathbf t}$. We will show that ${\mathbf s}=_{C}{\mathbf t}$. Indeed, ${\mathbf s}=_{B}{\mathbf t}$ due to the first assumption. Hence, by the second assumption, ${\mathbf s}=_{C}{\mathbf t}$.
\end{proof}

\begin{lemma}[contiguity]\label{}
If $G\vDash A,B\rhd C$, then $G\vDash {\cal B}(S),{\cal B}(T),B\rhd C$,
for each game $G=(V,E)$ over a graph $\Gamma$, each cut $(U,W)$ of $\Gamma$, and each $A\subseteq U$, $B\subseteq V$, and $C\subseteq W$.
\end{lemma}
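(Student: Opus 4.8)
The plan is to prove soundness of Contiguity by the standard hybrid-equilibrium argument. (Here the set ${\cal B}(S),{\cal B}(T)$ in the statement should read ${\cal B}(U),{\cal B}(W)$, matching the cut $(U,W)$.) Assuming $G\vDash A,B\rhd C$, I would take arbitrary ${\mathbf s},{\mathbf t}\in NE(G)$ with ${\mathbf s}=_{{\cal B}(U),{\cal B}(W),B}{\mathbf t}$ and aim to derive ${\mathbf s}=_C{\mathbf t}$. The device is to splice ${\mathbf s}$ and ${\mathbf t}$ along the cut: define a hybrid profile ${\mathbf r}$ by $r_v=t_v$ for $v\in U$ and $r_v=s_v$ for $v\in W$, show ${\mathbf r}\in NE(G)$, and then feed the pair $({\mathbf r},{\mathbf t})$ into the hypothesis.

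The heart of the argument is verifying ${\mathbf r}\in NE(G)$. Fix a player $v$; since $u_v$ depends only on the strategies in $Adj^+(v)$, it suffices to show that ${\mathbf r}$ agrees, on all of $Adj^+(v)$, with a single equilibrium (namely ${\mathbf t}$ when $v\in U$, and ${\mathbf s}$ when $v\in W$). Once that is established, the no-profitable-deviation condition for player $v$ transfers verbatim from that equilibrium to ${\mathbf r}$, since $v$'s pay-off under ${\mathbf r}$ and under any unilateral deviation is computed from exactly the coordinates on which the two profiles coincide.

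The combinatorial fact that makes this work is that every edge crossing the cut joins ${\cal B}(U)$ to ${\cal B}(W)$: if $v\in U$, then any neighbor of $v$ lying in $W$ is adjacent to $v\in V\setminus W$ and hence belongs to ${\cal B}(W)$, and symmetrically for $v\in W$. Because ${\mathbf s}$ and ${\mathbf t}$ coincide on both borders, on such a crossing neighbor we have $r=s=t$. Thus for $v\in U$ the profile ${\mathbf r}$ matches ${\mathbf t}$ throughout $Adj^+(v)$ — interior neighbors lie in $U$ where $r=t$, crossing neighbors lie in ${\cal B}(W)$ where $r=s=t$ — and symmetrically ${\mathbf r}$ matches ${\mathbf s}$ on $Adj^+(v)$ for $v\in W$. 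I expect this border bookkeeping to be the main obstacle, as it is precisely where the \emph{double} layer ${\cal B}(U),{\cal B}(W)$ of the Contiguity axiom, rather than a single separator, is indispensable: with only one border layer one endpoint of a crossing edge would be uncontrolled and the deviation check would fail.

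It then remains to read off the conclusion. By construction ${\mathbf r}=_U{\mathbf t}$, so ${\mathbf r}=_A{\mathbf t}$ because $A\subseteq U$; and ${\mathbf r}=_B{\mathbf t}$ because $r=t$ on $B\cap U$ while on $B\cap W$ the hypothesis ${\mathbf s}=_B{\mathbf t}$ gives $r=s=t$. Hence ${\mathbf r}=_{A,B}{\mathbf t}$, and $G\vDash A,B\rhd C$ yields ${\mathbf r}=_C{\mathbf t}$. Finally, since $C\subseteq W$ and $r=s$ on $W$, we also have ${\mathbf r}=_C{\mathbf s}$, and chaining the two equalities gives ${\mathbf s}=_C{\mathbf t}$, which is what $G\vDash {\cal B}(U),{\cal B}(W),B\rhd C$ requires.
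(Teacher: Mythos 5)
Your proposal is correct and follows essentially the same route as the paper: splice ${\mathbf s}$ and ${\mathbf t}$ along the cut, verify the hybrid is a Nash equilibrium because each player's payoff neighborhood $Adj^{+}(v)$ agrees with one of the two original equilibria (using that every cut-crossing edge joins ${\cal B}(U)$ to ${\cal B}(W)$, where ${\mathbf s}$ and ${\mathbf t}$ coincide), then apply the hypothesis $G\vDash A,B\rhd C$ to the hybrid. The only differences are cosmetic — you orient the hybrid the other way around and spell out the border bookkeeping that the paper leaves implicit — and you correctly note that ${\cal B}(S),{\cal B}(T)$ in the statement should read ${\cal B}(U),{\cal B}(W)$.
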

\begin{proof}
Suppose that $G\vDash A,B\rhd C$. Consider any ${\mathbf s}=\langle s_v\rangle_{v\in V}\in NE(G)$ and ${\mathbf t}=\langle t_v\rangle_{v\in V}\in NE(G)$ such that ${\mathbf s}=_{{\cal B}(U),{\cal B}(W),B}{\mathbf t}$. We will prove that ${\mathbf s}=_C {\mathbf t}$. Indeed, consider strategy profile ${\mathbf e}=\langle e_v\rangle_{v\in V}$ such that 
$$
e_v=
\left\{
\begin{array}{ll}
s_v   & \mbox{if $v\in U$,}  \\
t_v  & \mbox{if $v\in W$}.  
\end{array}
\right.
$$ 
We will first prove that ${\mathbf e}\in NE(G)$. Assuming the opposite, let $v\in V$ be a player in the game $G$ that can increase his pay-off by changing strategy in profile ${\mathbf e}$. Without loss of generality, let $v\in U$. Then, ${\mathbf e}=_{Adj(v)\cup{\{v\}}}{\mathbf s}$. Thus, player $v$  can also increase his pay-off by changing strategy in profile ${\mathbf s}$, which is a contradiction with the choice of ${\mathbf s}\in NE(G)$.

Note that ${\mathbf e}=_{U,B}{\mathbf s}$ and ${\mathbf e}=_{W,B}{\mathbf t}$. Thus, ${\mathbf e}=_{A,B}{\mathbf s}$ and ${\mathbf e}=_{C}{\mathbf s}$. Hence, ${\mathbf e}=_{C}{\mathbf s}$ by the assumption $G\vDash A,B\rhd C$. Therefore, ${\mathbf s}=_{C}{\mathbf e} =_{C}{\mathbf t}$.
\end{proof}

\section{Completeness}

\begin{lemma}\label{border union}
${\cal B}(X\cup Y)\subseteq {\cal B}(X)\cup {\cal B}(Y)$.
\end{lemma}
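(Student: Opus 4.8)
The plan is to prove the claimed inclusion $\mathcal{B}(X\cup Y)\subseteq \mathcal{B}(X)\cup\mathcal{B}(Y)$ by a direct element-chase, unwinding Definition~\ref{border}. First I would take an arbitrary vertex $v\in\mathcal{B}(X\cup Y)$ and show it lies in $\mathcal{B}(X)\cup\mathcal{B}(Y)$. By the definition of border, $v\in X\cup Y$ and there is a vertex $w\in V\setminus(X\cup Y)$ with $(v,w)\in E$. The key observation is that $V\setminus(X\cup Y)\subseteq V\setminus X$ and likewise $V\setminus(X\cup Y)\subseteq V\setminus Y$, so the witness $w$ that escapes the union also escapes each of $X$ and $Y$ individually.

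The case split is on which of $X$ or $Y$ contains $v$. Since $v\in X\cup Y$, without loss of generality assume $v\in X$ (the case $v\in Y$ is symmetric). Then $v\in X$, and the edge $(v,w)\in E$ connects $v$ to a vertex $w\in V\setminus(X\cup Y)\subseteq V\setminus X$. Hence $v$ satisfies exactly the membership condition of Definition~\ref{border} for the set $X$, so $v\in\mathcal{B}(X)\subseteq\mathcal{B}(X)\cup\mathcal{B}(Y)$. Symmetrically, if $v\in Y$ then $v\in\mathcal{B}(Y)$. In either case $v\in\mathcal{B}(X)\cup\mathcal{B}(Y)$, which completes the argument.

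There is no real obstacle here: the statement is essentially a monotonicity-type fact about the border operator, and the only point requiring care is that the escaping witness $w$ for the union automatically serves as a witness for whichever single set $v$ belongs to, precisely because complements reverse the inclusion $X\subseteq X\cup Y$. I would keep the write-up short, presenting the element-chase and the one-line case split without belaboring the set-theoretic details.
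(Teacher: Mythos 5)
Your proof is correct and follows essentially the same element-chase as the paper: take $v\in\mathcal{B}(X\cup Y)$ with its escaping witness $w\notin X\cup Y$, assume without loss of generality $v\in X$, and observe that $w\notin X$ makes $v\in\mathcal{B}(X)$. No differences worth noting.
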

\begin{proof}
Let $v\in {\cal B}(X\cup Y)$. Thus, $v\in X\cup Y$ and there is $w\notin X\cup Y$ such that $(v,w)\in E$. Without loss of generality, assume that $v\in X$. Hence, $v\in X$ and $w\notin X$. Therefore, $v\in {\cal B}(X)$. 
\end{proof}

\begin{lemma}\label{left mono}
$\vdash A\rhd C \rightarrow A,B \rhd C.$
\end{lemma}
\begin{proof}
Assume $A\rhd C$. By the Reflexivity axiom, $A,B\rhd A$. Thus, by the Transitivity axiom, $A,B\rhd C$. 
\end{proof}

\begin{lemma}\label{right mono}
$\vdash A\rhd B, C \rightarrow A \rhd B.$
\end{lemma}
\begin{proof}
Assume $A\rhd B, C$. By the Reflexivity axiom, $B,C\rhd B$. Thus, by the Transitivity axiom, $A\rhd B$. 
\end{proof}

\begin{theorem}\label{completeness}
For any graph $\Gamma=(V,E)$ and any formula $\phi\in \Phi(V)$, if $\nvdash_\Gamma \phi$, then there must exist a game $(V,\{S_v\}_{v\in V},\{u_v\}_{v\in V})$ over graph $\Gamma$ such that $G\nvDash \phi$.
\end{theorem}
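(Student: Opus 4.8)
The plan is to prove the contrapositive in the standard ``canonical model'' style, first peeling off the propositional structure of $\phi$ and then building a single finite game that realizes a whole consistent pattern of dependencies. Regard $\Phi(\Gamma)$ as a propositional language whose atoms are the formulas $A\rhd B$. Since provability here is propositional derivability from the four axiom schemes via Modus Ponens, the hypothesis $\nvdash_\Gamma\phi$ yields, by propositional completeness, a Boolean valuation $\nu$ of the atoms that satisfies every instance of Reflexivity, Augmentation, Transitivity and Contiguity while making $\phi$ false. Define $A\rhd_\nu B$ to hold exactly when $\nu(A\rhd B)=\mathbf{true}$; because $\nu$ validates the axioms, $\rhd_\nu$ is closed under all four of them, so the operator $cl(A)=\{v:\nu(A\rhd v)=\mathbf{true}\}$ is a closure operator that in addition respects the Contiguity condition across every cut. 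The theorem then reduces to a realizability claim: there is a finite game $G$ over $\Gamma$ with $G\vDash A\rhd B$ iff $A\rhd_\nu B$ for every atom. Granting this, a routine induction on $\phi$, matching clauses (i)--(iii) of Definition~\ref{true} against the Boolean clauses for $\nu$, gives $G\vDash\psi\iff\nu(\psi)=\mathbf{true}$ for all subformulas, hence $G\nvDash\phi$.

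For the realizability claim I would first fix the intended equilibrium set. By the classical Armstrong construction there is a finite set $E_0$ of tuples over $V$ whose satisfied dependencies are exactly those with $B\subseteq cl(A)$. I would then close $E_0$ under the ``cut-splicing'' operation from the soundness proof of Contiguity: whenever $\mathbf{s},\mathbf{t}\in E$ agree on $\mathcal{B}(U)\cup\mathcal{B}(W)$ for a cut $(U,W)$, the profile agreeing with $\mathbf{s}$ on $U$ and with $\mathbf{t}$ on $W$ is added to $E$. Let $E$ be the resulting splicing-closed set and let each $S_v$ be the (finite) set of values appearing at $v$; since splicing only recombines existing values, $E$ stays finite, which is what the introduction's finiteness claim requires. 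The game rewards local agreement with $E$: set $u_v(\mathbf{s})=1$ when $\mathbf{s}|_{Adj^+(v)}$ coincides with the corresponding restriction of some member of $E$, and $u_v(\mathbf{s})=0$ otherwise; this depends only on the coordinates in $Adj^+(v)$, as required. Every $\mathbf{e}\in E$ is then a Nash equilibrium because each player already attains the maximal pay-off $1$, so $E\subseteq NE(G)$.

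The two genuinely substantive steps, and the main obstacles, are showing that $E$ realizes \emph{exactly} $cl$ and that $NE(G)\subseteq E$. For the first, adjoining tuples by splicing can only destroy dependencies, never create them, so I must verify that no dependency of $cl$ is lost; this is precisely where the Contiguity-closure of $\nu$ enters, since a spliced profile differs from $\mathbf{s}$ and $\mathbf{t}$ only in a manner controlled by the border of a cut, and the Contiguity condition guarantees that any $A\rhd v$ with $v\in cl(A)$ already factors through $\mathcal{B}(U),\mathcal{B}(W)$ and is therefore preserved under the splice. For the inclusion $NE(G)\subseteq E$ I would show that in any equilibrium every player sits at pay-off $1$, ruling out ``all-zero'' traps by the gluing property that splicing-closure provides (possibly after adjoining a dominated default strategy to each player so that a locally consistent deviation is always available), and then that a profile locally consistent with $E$ at every closed neighborhood must, by this same local-to-global gluing, belong to $E$. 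The delicate point throughout is that the information-flow argument behind Contiguity has to be run in reverse: soundness tells us splicing preserves equilibria, whereas here I must additionally ensure that it preserves the \emph{absence} of spurious dependencies and introduces no unintended equilibria, all while keeping every strategy set finite.
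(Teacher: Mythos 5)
Your overall architecture (maximal consistent valuation, the closure operator $cl$, a game realizing exactly the closed dependencies, and a final induction on $\phi$) matches the paper's, but the heart of the theorem is the realizability claim, and there your construction has a genuine gap that you yourself flag but do not close: controlling $NE(G)$ from above. With the pay-off ``$1$ iff your $Adj^+(v)$-restriction matches some member of $E$,'' the set $NE(G)$ is not $E$ and is not even close to it. First, any profile in which several players are simultaneously ``stuck'' at pay-off $0$ with no unilateral escape is a Nash equilibrium; these arise as soon as two non-adjacent players hold values that no single deviation can reconcile, and adjoining a dominated default strategy does not remove them (a dominated option is never a profitable deviation, so it cannot destabilize anything). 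Second, even among all-pay-off-$1$ profiles, local consistency with $E$ on every closed neighborhood does not imply global membership in $E$: different players may be matching \emph{different} members of $E$, and closure under cut-splicing is far weaker than the local-to-global gluing you invoke (splicing recombines two tuples across one cut; it does not generate every profile assembled from overlapping fragments of many tuples). Since spurious equilibria destroy dependencies, either failure can falsify some $A\rhd B$ with $B\subseteq cl(A)$, and your argument gives no way to rule this out.

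The paper's proof supplies exactly the missing device. It does not try to prescribe $NE(G)$ as an Armstrong-style relation; instead, for each $A\subseteq V$ it builds a game $G_A$ in which every player may opt into a Matching Pennies mini-game with each neighbor, and a player is penalized for \emph{not} playing pennies whenever his closed neighborhood contains both a $0$ and a $1$ outside $A^*$. Because two-player Matching Pennies has no equilibrium, no one plays pennies in any Nash equilibrium (Lemma~\ref{ne G_A}), and the penalty then forces all non-$A^*$ players in a closed neighborhood to agree (Lemma~\ref{river}); this yields a complete, checkable description of $NE(G_A)$ (constant values on the $\sim$-classes), from which both directions --- $C\rhd D\in M$ implies $G_A\vDash C\rhd D$ (Lemma~\ref{CD}, where Contiguity is used syntactically inside $M$, not semantically on splices), and $G_A\vDash A\rhd b$ implies $b\in A^*$ (Lemma~\ref{b notin A*}) --- follow. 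A single game is then obtained by the product construction of Lemma~\ref{iff}. If you want to salvage your route, you would need to replace the ``reward for local agreement'' pay-offs with some analogous destabilization mechanism guaranteeing that every Nash equilibrium is globally, not just locally, one of the intended profiles; as written, the two steps you call ``genuinely substantive'' are precisely the theorem, and they remain unproved.
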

\begin{proof}
Suppose that $\nvdash_\Gamma \phi$. Let $M$ be any maximal consistent subset of formulas in $\Phi(\Gamma)$ such that $\neg\phi\in M$.
\begin{definition}\label{A*}
For any set of vertices $A$, let $A^*$ be the set $\{v\in V\;|\; M \vdash A\rhd v\}$.
\end{definition}

\begin{theorem}\label{AsubA*}
$A\subseteq A^*$, for any $A\subseteq V$.
\end{theorem}
\begin{proof}
Let $a\in A$. By the Reflexivity axiom, $\vdash A\rhd a$. Hence, $a\in A^*$.
\end{proof}

\begin{lemma}\label{ArhdA*}
$M\vdash A\rhd A^*$, for any $A\subseteq V$.
\end{lemma}
\begin{proof}
Let $A^*=\{a_1,\dots,a_n\}$. By the definition of $A^*$,  $M\vdash A\rhd a_i$, for any $i\le n$. We will
prove, by induction on $k$, that $M\vdash (A\rhd a_1,\dots,a_k)$ for any $0\le k\le n$. 

\noindent {\em Base Case}: $M\vdash A\rhd \varnothing$ by the Reflexivity axiom.

\noindent {\em Induction Step}: Assume that $M\vdash (A\rhd a_1,\dots,a_k)$. By the Augmentation axiom, 
\begin{equation}\label{eq0}
M\vdash A, a_{k+1}\rhd a_1,\dots,a_k,a_{k+1}.
\end{equation}
Recall that $M\vdash A\rhd a_{k+1}$. Again by the Augmentation axiom, $M\vdash (A\rhd A, a_{k+1})$.
Hence, $M\vdash (A \rhd a_1,\dots,a_k,a_{k+1})$, by (\ref{eq0}) and the Transitivity axiom.
\end{proof}

For any set of vertices $A$, we will now define strategic game $$G_A=(V,\{S_v\}_{v\in V},\{u_v\}_{v\in V})$$ over graph $\Gamma$. For the purposes of this definition only, we assume that a direction is assigned to each edge of the graph $\Gamma$ in an arbitrary way.

Any player $v\in A^*$ may either choose strategy  $pass$ or opt to play ``pennies" with all of his adjacent players. In the latter case, he decides on either $heads$ or $tails$ for each adjacent player. The player cannot choose to pass with one player and to play pennies with others. Formally, if $v\in A^*$, then 
$$S_v=\{pass\}\cup \{f\;|\; f: Adj(v) \rightarrow \{heads, tails\}\}.$$ 
Similarly, any player $v\notin A^*$ may either choose  between strategies $0$ and $1$ or decide to play  pennies with all of his adjacent players. Thus, if $v\notin A^*$, then
$$S_v=\{0,1\}\cup \{f\;|\; f: Adj(v)\rightarrow \{heads, tails\}\}.$$
Furthermore, it will be assumed that any isolated (one that has no adjacent vertices) vertex of the graph  is prohibited from playing the pennies game. Thus such vertices either have  a single strategy $pass$ or a set of just two strategies: 0 and 1.

We define the pay-off function of any player $v$ as the sum of rewards in individual pennies mini-games on the edges adjacent to $v$ or a possible penalty imposed on $v$ for not playing the pennies.

\noindent{\bf Penalty.} If there are $u,w\in Adj^+(v)\setminus A^*$ such that $u$ plays strategy $0$ and $w$ plays strategy $1$, then a penalty in the amount of 1 is imposed on $v$ unless $v$ plays pennies.  

\begin{wrapfigure}{l}{0.45\textwidth}
\begin{center}
\vspace{0mm}
\scalebox{.5}{\includegraphics{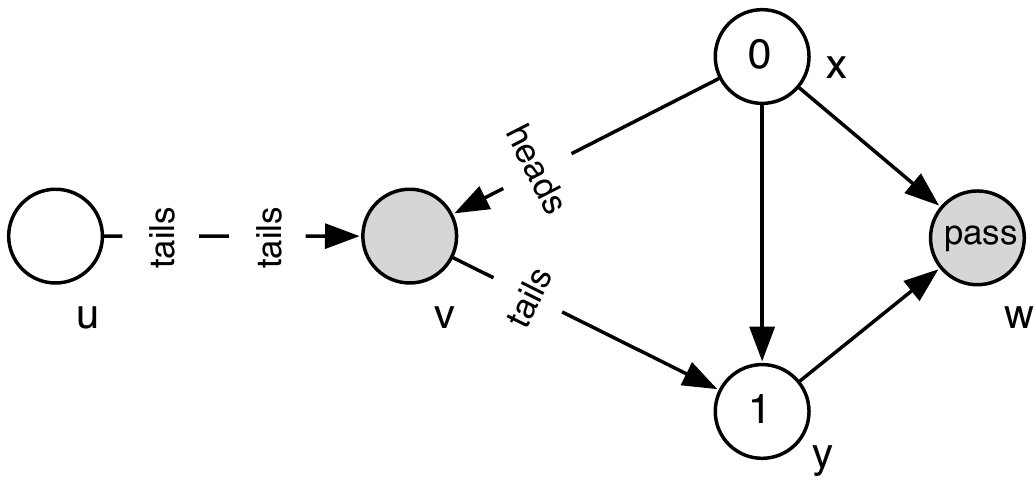}}
\vspace{0mm}
\footnotesize\caption{Strategy Profile}\label{payoff}
\vspace{-2mm}
\end{center}
\vspace{-1mm}
\end{wrapfigure}

For example, consider the strategy profile depicted in Figure~\ref{payoff}.  We will assume that elements of $A^*$ are the shaded vertices.  Vertices $w$, $x$, and $y$ are subject to the penalty because player $x$ plays 0 and player $y$ plays 1.  Players $u$ and $v$ have chosen to play pennies, and thus are not subject to any penalties.

\noindent{\bf Rewards.} In an individual mini-game along any edge of the graph $\Gamma$, rewards are only given if both players are playing pennies. The rewards are given according to the rules of a variation of the standard Matching Pennies game: the player who is, say, at the beginning of the directed edge is rewarded 1 for matching his opponent's strategy and the player at the opposite end of the edge is rewarded 1 for not matching his opponent's strategy.

For example, in the strategy profile depicted in Figure~\ref{payoff}, player $u$ gets the reward 1 for matching player $v$. Player $v$ is not rewarded. Players $x$, $y$, and $w$ also do not receive any rewards since they are not playing pennies. 

This concludes the definition of the game $G_A$.


\begin{lemma}\label{ne G_A}
In any Nash equilibrium of the game $G_A$, no player is playing the pennies game.
\end{lemma}
\begin{proof}
Assume that a vertex $v$ is playing pennies games in a strategy profile $\mathbf s$. Due to the definition of the set of strategies in the game $G_A$, vertex $v$ can not be an isolated vertex in the graph $\Gamma$. Let $u$ be any vertex adjacent to $v$. If player $u$ is not playing pennies in ${\mathbf s}$, then he can increase his pay-off by playing pennies. If player $u$ is playing pennies in ${\mathbf s}$, then either player $u$ or player $v$ would want to switch his strategy in the mini-game along edge $(u,v)$ since the two-player Matching Pennies game has no Nash equilibria. Therefore, strategy profile $\mathbf s$ is not a Nash equilibrium.
\end{proof}

\begin{lemma}\label{river}
$s_{w_1}= s_{w_2}$ for each $w_1,w_2\in Adj^+(v)\setminus A^*$, each $v\in V$, and each ${\mathbf s}=\langle s_w\rangle_{w\in V}\in NE(G_A)$.
\end{lemma}
\begin{proof}
By Lemma~\ref{ne G_A}, no player is playing pennies in profile $\mathbf s$. Suppose that $s_{w_1}\neq s_{w_2}$ for some $w_1,w_2\in Adj^+(v)\setminus A^*$. Thus, player $v$ is subject to penalty in the strategy profile $\mathbf s$. Then, he can increase his pay-off by starting to play pennies and avoiding the penalty. Therefore, ${\mathbf s}\notin NE(G_A)$.
\end{proof}



\begin{definition}\label{}
For any $u,v\in V$, let $u\sim v$ if there is a path from $u$ to $v$ in graph $\Gamma$ such that no two consecutive vertices of the path belong to the set $A^*$.
\end{definition}

\begin{lemma}\label{}
Relation $\sim$ is an equivalence relation on the set $V$. \qed
\end{lemma}
By $[v]$ we will denote the equivalence class of vertex $v$ with respect to this relation.

\begin{lemma}\label{sim}
If $u\sim v$, then $s_u=s_v$, for each $u,v\notin A^*$ and each  $\langle s_w\rangle_{w\in V}\in NE(G_A)$.
\end{lemma}
\begin{proof}
Let $\pi=(w_0,w_1,\dots,w_k)$ be a path connecting vertices $u$ and $v$ ($w_0=u$ and $w_k=v$) such that no two consecutive vertices in $\pi$ belong to $A^*$. We prove the statement by induction on $k$. If $k=0$ then $u=v$. Thus, $s_u=s_v$. Suppose now that $k>0$.

\noindent{Case I:} $w_1\notin A^*$. Thus, by Lemma~\ref{river}, $s_u=s_{w_1}$. By the Induction Hypothesis, $s_{w_1}=s_v$. Therefore, $s_u=s_v$.

\noindent{Case II:} $w_1\in A^*$. Thus, $w_1\neq v$ and, since no two consecutive vertices in $\pi$ belong to $A^*$, we have $w_2\notin A^*$. Hence, $s_u=s_{w_2}$ by Lemma~\ref{river} and $s_{w_2}=s_v$ by the Induction Hypothesis. Therefore, $s_u=s_v$.
\end{proof}

\begin{lemma}\label{border A*}
${\cal B}([v])\subseteq A^*$ for each $v\in V$.
\end{lemma}
\begin{proof}
Let $w\in {\cal B}([v])$, but $w\notin A^*$. By Definition~\ref{border}, there is $u\notin [v]$ such that $(w,u)\in E$. Consider the two-vertex path $\pi=(w,u)$. Since $w\notin A^*$, no two consecutive vertices in $\pi$ belong to $A^*$. 
Hence, $w\sim u$. Note that $v\sim w$ by Definition~\ref{border}. Thus, $v\sim u$, which is a contradiction. 
\end{proof}

\begin{lemma}\label{CD}
If $C\rhd D\in M$, then $G_A\vDash C\rhd D$.
\end{lemma}
\begin{proof}
Let ${\mathbf s}=\langle s_v\rangle_{v\in V},{\mathbf s'}=\langle s'_v\rangle_{v\in V}\in NE(G_A)$ such that ${\mathbf s}=_C{\mathbf s'}$. It will be sufficient to show that $s_d=s'_d$ for each $d\in D$. Consider any $d\in D$. If $d\in A^*$, then player $d$ has only two options in the game $G_A$: to play pennies or to decide to $pass$. By Lemma~\ref{ne G_A}, player $d$ chooses the strategy $pass$ under strategy profiles  ${\mathbf s}$ and ${\mathbf s'}$. Therefore,
$s_d =s'_d$. We will now assume that $d\notin A^*$. 


If $d\sim c_0$ for some $c_0\in C\setminus A^*$, then, by Lemma~\ref{sim}, $s_d=s_{c_0}=s'_{c_0}=s'_d.$
We will now assume that $d\nsim c$ for each $c\in C\setminus A^*$. Thus, $C\setminus A^*\subseteq \bigcup_{v\notin [d]}[v]$. Consider cut $(\bigcup_{v\notin [d]}[v], [d])$. By the Contiguity axiom,
$$
C\setminus A^*, A^*\rhd d \rightarrow {\cal B}(\bigcup_{v\notin [d]}[v]), {\cal B}([d]), A^*\rhd d 
$$
Due to the assumption $M\vdash C\rhd D$ and Lemma~\ref{left mono} and Lemma~\ref{right mono},
$$
M\vdash C\setminus A^*, A^*\rhd d
$$
Thus,
$$
M\vdash {\cal B}(\bigcup_{v\notin [d]}[v]), {\cal B}([d]), A^*\rhd d 
$$
By Lemma~\ref{border union}, ${\cal B}(\bigcup_{v\notin [d]}[v])\subseteq \bigcup_{v\notin [d]}{\cal B}([v])$
Hence, by Lemma~\ref{left mono},
$$
M\vdash \bigcup_{v\notin [d]}{\cal B}([v]), {\cal B}([d]), A^*\rhd d 
$$
Then, by Lemma~\ref{border A*}, $M\vdash  A^*\rhd d $. Thus, by Lemma~\ref{ArhdA*} and the Transitivity axiom, 
$M\vdash  A\rhd d$. Therefore, $d\in A^*$, which is a contradiction.
\end{proof}

\begin{definition}\label{sepsilon}
For any $A\subseteq V$ and any $k\in \{0,1\}$, let strategy profile ${\mathbf s}^{k,A}$ be defined as
$$
s^{k,A}_v=
\left\{
\begin{array}{ll}
pass   & \mbox{if $v\in A^*$,}  \\
k   & \mbox{otherwise}.  
\end{array}
\right.
$$ 
\end{definition}

\begin{lemma}\label{sepsilon is NE}
${\mathbf s}^{k,A}\in NE(G_A)$ for each $k\in \{0,1\}$.
\end{lemma}
\begin{proof}
By the definition of the game $G_A$, no player is paying a penalty in the strategy profile ${\mathbf s}^{k,A}$. At the same time, no player can get a reward by unilaterally switching to playing pennies.
\end{proof}

\begin{lemma}\label{b notin A*}
If $G_A\vDash A\rhd b$, then  $b\in A^*$.
\end{lemma}
\begin{proof}
Assume that $b\notin A^*$.
By Lemma~\ref{AsubA*}, ${\mathbf s}^{0,A}=_A{\mathbf s}^{1,A}$. At the same time, $s^{0,A}_b=0\neq 1=s^{1,A}_b$ since $b\notin A^*$. Therefore, $G_A\nvDash A\rhd b$.
\end{proof}

The product construction below defines a way to combine several games played over the same graph into a single game. The pay-off for a given player in the combined game is the sum of his pay-offs in the individual games.

\begin{definition}\label{}
Let $G^i=(V,\{S^i_v\}_{v\in V},\{u^i_v\}_{v\in V})$ for $i\in I$ be any family of games over the same graph $\Gamma=(V,E)$. By $\prod_{i\in I}G^i$ we mean game $(V,\{S_v\}_{v\in V},\{u_v\}_{v\in V})$ such that
\begin{enumerate}
\item $S_v$ is the Cartesian product  $\prod_{i\in I}S^i_v$,
\item $u_v=\sum_{i\in I}u^i_p$.
\end{enumerate}
\end{definition}

\begin{lemma}\label{big small}
If $\langle\langle s^i_v\rangle_{i\in I}\rangle_{v\in V} \in NE(\prod_{i\in I}G^i)$, then 
$\langle s^{i_0}_v\rangle_{v\in V} \in NE(G^{i_0})$ for each $i_0\in I$. \qed
\end{lemma}

\begin{lemma}\label{small big}
If $\langle s^{i}_v\rangle_{v\in V} \in NE(G^{i})$ for each $i\in I$,
then $$\langle\langle s^i_v\rangle_{i\in I}\rangle_{v\in V} \in NE(\prod_{i\in I}G^i).$$ \qed
\end{lemma}

\begin{lemma}\label{iff}
If $\{G^i\}_{i\in I}$ is a family of games over a graph $\Gamma$ such that each of these games has a nonempty set of Nash equilibria, then $\prod_{i\in I}G^i\vDash C\rhd D$ if and only if for each $i\in I$, $G^i\vDash C\rhd D$.
\end{lemma}
\begin{proof}
$(\Rightarrow):$ Assume that ${\mathbf s}^{i_0}=\langle s^{i_0}_v\rangle_{v\in V}\in NE(G^{i_0})$ and ${\mathbf t}^{i_0}=\langle t^{i_0}_v\rangle_{v\in V}\in NE(G^{i_0})$ are such that ${\mathbf s}^{i_0}=_C{\mathbf t}^{i_0}$. We will show that ${\mathbf s}^{i_0}=_D{\mathbf t}^{i_0}$.

By the assumption of the lemma, for each $i\in I$, the game $G^i$ has at least one Nash equilibria. We denote an arbitrary one of them by ${\mathbf e}^i=\langle e^i_v\rangle_{v\in V}$. Consider strategy profiles ${\mathbf S}=\langle \langle s^i_v\rangle_{i\in I}\rangle_{v\in V}$ and ${\mathbf T}=\langle \langle t^i_v\rangle_{i\in I}\rangle_{v\in V}$ for the game $\prod_{i\in I}G^i$ such that
$$
s^i_v=
\left\{
\begin{array}{ll}
s^{i_0}_v   & \mbox{if $i=i_0$,}  \\
e^i_v   & \mbox{otherwise}.  
\end{array}
\right.
$$ 
$$
t^i_v=
\left\{
\begin{array}{ll}
t^{i_0}_v   & \mbox{if $i=i_0$,}  \\
e^i_v   & \mbox{otherwise}.  
\end{array}
\right.
$$ 
By Lemma~\ref{small big}, we have ${\mathbf S},{\mathbf T}\in NE(\prod_{i\in I}G^i)$. Note that ${\mathbf S}=_C{\mathbf T}$ due to the assumption ${\mathbf s}^{i_0}=_C{\mathbf t}^{i_0}$. Hence, ${\mathbf S}=_D{\mathbf T}$ by the assumption of the lemma. Therefore, ${\mathbf s}^{i_0}=_D{\mathbf t}^{i_0}$.
 
$(\Leftarrow):$ Consider Nash equilibria ${\mathbf S}=\langle \langle s^i_v\rangle_{i\in I}\rangle_{v\in V}$ and ${\mathbf T}=\langle \langle t^i_v\rangle_{i\in I}\rangle_{v\in V}$ of the game $\prod_{i\in I}G^i$ such that ${\mathbf S}=_C{\mathbf T}$. We will show that ${\mathbf S}=_D{\mathbf T}$. It will be sufficient to show that $ \langle s^i_v\rangle_{v\in V}=_D \langle t^i_v\rangle_{v\in V}$ for each $i\in I$. Indeed, $ \langle s^i_v\rangle_{v\in V}=_C \langle t^i_v\rangle_{v\in V}$ due to the assumption ${\mathbf S}=_C{\mathbf T}$. By Lemma~\ref{big small}, $ \langle s^i_v\rangle_{v\in V},\langle t^i_v\rangle_{v\in V}\in NE(G^i)$.  Therefore, $ \langle s^i_v\rangle_{v\in V}=_D \langle t^i_v\rangle_{v\in V}$ by the assumption of the lemma.
\end{proof}
\begin{lemma}\label{iff2}
For any $\psi\in \Phi(\Gamma)$,
$\psi\in M$ if and only if $\prod_{A\subseteq V}G_A\vDash\psi$.
\end{lemma}
\begin{proof}
Induction on the structural complexity of formula $\psi$. The case $\psi\equiv\bot$ follows from the assumption of consistency of the set $M$ and Definition~\ref{true}. The case $\psi\equiv\psi_1\rightarrow\psi_2$ follows from  maximality and consistency of the set M and Definition~\ref{true} in the standard way.  Assume now that $\psi\equiv E\rhd F$.

If $E\rhd F\in M$, then, by Lemma~\ref{CD}, $G_A\vDash E\rhd F$ for each $A\subseteq V$. By Lemma~\ref{sepsilon is NE}, each of the games $G_A$ has at least one Nash equilibria: ${\mathbf s}^{0,A}$. Therefore, $\prod_{A\subseteq V}G_A\vDash E\rhd F$ by Lemma~\ref{iff}.

If $\prod_{A\subseteq V}G_A\vDash E\rhd F$, then by Lemma~\ref{iff}, $G_E\vDash E\rhd F$. Hence, by Definition~\ref{true},  $G_E\vDash E\rhd f$ for each $f\in F$. Hence, by Lemma~\ref{b notin A*}, $f\in E^*$ for each $f\in F$. Thus, $F\subseteq E^*$. Note that $M\vdash E\rhd E^*$ by Lemma~\ref{ArhdA*}. Hence, $M\vdash E\rhd F$ by Lemma~\ref{right mono}. Therefore, $E\rhd F\in M$ due to maximality of $M$.
\end{proof}

To finish the proof of the theorem, recall that $\neg\phi\in M$. Thus, $\phi\notin M$ due to consistency of $M$. Therefore, by Lemma~\ref{iff2}, $\prod_{A\subseteq V}G_A\nvDash\phi$.
\end{proof}

\section{Conclusion}

In this paper, we have described a sound and complete logical system for functional dependence in strategic games over a fixed dependency graph. The dependency graph puts restrictions on the type of pay-off functions that can be used in the game. If no such restrictions are imposed, then the logical system for functional dependence in strategic games is just the set of original Armstrong axioms. This statement follows from our results since the absence of restrictions corresponds to the case of a complete (in the graph theory sense) dependency graph. In the case of a complete graph, the Contiguity axiom follows from the Armstrong axioms because for any cut $(U,W)$, the set ${\cal B}(U)\cup{\cal B}(W)$ is the set of all vertices in the graph.

\bibliography{../sp}

\pagebreak

\appendix

\end{document}